\newtheorem{thm}{Theorem}[section]
\newtheorem{defi}[thm]{Definition}
\newtheorem{prop}[thm]{Proposition}
\newtheorem{cor}[thm]{Corollary}
\newtheorem{lemma}[thm]{Lemma}
\theoremstyle{definition}
\newtheorem{rem}[thm]{Remark}
\def\colim{\operatorname{colim}\nolimits}
\def\Comp{\operatorname{Comp}\nolimits}
\def\mComp{\operatorname{\!-Comp}\nolimits}
\def\en{{\mathrm{en}}}
\def\eps{\varepsilon}
\def\mexact{\operatorname{\!-exact}\nolimits}
\def\Ho{\operatorname{Ho}\nolimits}
\def\HH{\operatorname{HH}\nolimits}
\def\Hom{\operatorname{Hom}\nolimits}
\def\Id{\operatorname{Id}\nolimits}
\def\id{\operatorname{id}\nolimits}
\def\im{\operatorname{im}\nolimits}
\def\iso{\buildrel \sim\over\to}
\def\mMOD{\operatorname{\!-Mod}\nolimits}
\def\mMod{\operatorname{\!-mod}\nolimits}
\def\mmodgr{\operatorname{\!-modgr}\nolimits}
\def\mprojgr{\operatorname{\!-projgr}\nolimits}
\def\mproj{\operatorname{\!-proj}\nolimits}
\def\mProj{\operatorname{\!-Proj}\nolimits}
\def\opp{{\operatorname{opp}\nolimits}}
\def\Tor{\operatorname{Tor}\nolimits}
\def\Tot{\operatorname{Tot}\nolimits}
\def\CA{{\mathcal{A}}}
\def\CB{{\mathcal{B}}}
\def\CC{{\mathcal{C}}}
\def\CF{{\mathcal{F}}}
\def\CH{{\mathcal{H}}}
\def\CS{{\mathcal{S}}}
\def\CT{{\mathcal{T}}}
\def\BC{{\mathbf{C}}}
\def\BN{{\mathbf{N}}}
\def\BZ{{\mathbf{Z}}}
\def\GS{{\mathfrak{S}}}
\begin{document}
\title{Khovanov-Rozansky homology and $2$-braid groups}
\author{Rapha\"el Rouquier}
\address{Mathematical Institute,
University of Oxford, 24-29 St Giles', Oxford, OX1 3LB, UK
and Department of Mathematics, UCLA, Box 951555,
Los Angeles, CA 90095-1555, USA}
\email{rouquier@maths.ox.ac.uk}
\maketitle

\section{Introduction}
Khovanov \cite{Kh} has given a construction
of the Khovanov-Rozansky link invariants (categorifying the HOMFLYPT
invariant) using Hochschild cohomology of $2$-braid groups. We give a direct
proof that his construction does give link invariants. We show more
generally that, for any finite Coxeter group, his construction provides a
Markov ``$2$-trace'', and we actually show that the invariant takes value in
suitable derived categories. This makes more precise a result of Trafim Lasy who has shown
that, after taking
the class in $K_0$, this provides a Markov trace \cite{La1,La2}. It coincides with
Gomi's trace \cite{Go} for Weyl groups (Webster and Williamson \cite{WeWi})
as well as for dihedral groups \cite{La1}.

In the first section,
we recall the construction of $2$-braid groups \cite{Rou1}, based on
complexes of Soergel bimodules. The second section is devoted to Markov traces, and
a category-valued version, $2$-Markov traces. We provide a construction using
Hochschild cohomology. The third section is devoted to the proof of the Markov
property for Hochschild cohomology.


\section{Notations}
Let $k$ be a commutative ring. We write $\otimes$ for $\otimes_k$.
Let $A$ be a $k$-algebra. 
We denote by $A^\opp$ the opposite algebra to $A$ and we
put $A^{\en}=A\otimes A^\opp$.

We denote by $A\mMOD$ the category of $A$-modules, by
$A\mMod$ the category of finitely generated $A$-modules, by
$A\mProj$ the category of projective $A$-modules and by 
$A\mproj$ the category of finitely generated projective $A$-modules.
Assume $A$ is graded. We denote by $A\mmodgr$ (resp. $A\mprojgr$)
the category of finitely generated (resp. and projective)
graded $A$-modules.

Given $M$ a graded $k$-module and $n\in\BZ$, we denote by $M\langle n\rangle$
the graded $k$-module given by $M\langle n\rangle_i=M_{n+i}$.

Given $\CA$ an additive category, we denote by $\Comp(\CA)$
(resp. $\Ho(\CA)$) the category
(resp. the homotopy category)
of complexes of objects of $\CA$. If $\CA$ is an abelian category, we
denote by $D(\CA)$ its derived category.

Given $\CC$ a category, we denote by $\{1\}$ the self equivalence of
$\CC^\BZ$ given by $(M\{1\})_i=M_{i+1}$.

Let $\CT$ be a triangulated category equipped with an automorphism $M\mapsto M\langle 1
\rangle$. We denote by $q$ the automorphism of $K_0(\CT)$ given by 
$[M]\mapsto [M\langle 1\rangle]$. This endows $K_0(\CT)$ with a structure
of $\BZ[q,q^{-1}]$-module.
\section{$2$-braid groups}
\subsection{Braid groups}

Let $(W,S)$ be a finite Coxeter group. Let
$V$ be the geometric representation of $W$ over $k=\BC$:
it comes with a basis $\{e_s\}_{s\in S}$.
Given $s\in S$, we denote by $\alpha_s$ the linear form on $V$
such that $s(x)-x=\alpha_s(x)e_s$ for all $x\in V$. The set
$\{\alpha_s\}_{s\in S}$ is a basis of $V^*$. Let $P=P_S=P_{(W,S)}=k[V]$ (we will denote
by $X_S$ or $X_{(W,S)}$ a given object constructed from $(W,S)$).

\smallskip
The braid group $B_S=B_{(W,S)}$ associated to $(W,S)$ is the group generated by
$\{\sigma_s\}_{s\in S}$ with relations
$$\underbrace{\sigma_s\sigma_t\sigma_s\cdots}_{m_{st} \text{ terms}}=
\underbrace{\sigma_t\sigma_s\sigma_t\cdots}_{m_{st} \text{ terms}}$$
for any $s,t\in S$ such that the order $m_{st}$ of $st$ is finite.

We denote by $l:B_S\to\BZ$ the length function. It is the
morphism of groups defined by $l(\sigma_s)=1$ for $s\in S$.

\subsection{Lift}

Let us recall, following \cite{Rou1}, how to lift in a non-trivial
way the action of $W$ on the derived category
$D(P)$ to an action of $B_S$ on the homotopy category $\Ho(P)$.

Let $s\in S$. We put
$$\theta_s=P\otimes_{P^s}P\text{ and }
F_s=0\to \theta_s\langle 1\rangle\xrightarrow{m}P\langle 1\rangle\to 0.$$
The latter
is a complex of graded $P^{\en}$-modules, where $P\langle 1\rangle$ is in cohomological
degree $1$ and $m$ denotes the multiplication map.
We put 
$$F_s^{-1}=0\to P\langle -1\rangle\xrightarrow{a\mapsto a\alpha_s\otimes 1+
a\otimes\alpha_s} \theta_s\to 0.$$
This is a complex of graded $P^{\en}$-modules, where $P\langle -1\rangle$
is in cohomological degree $-1$.

\smallskip
Let us recall a result of \cite[\S 9]{Rou1}.
Given $i_1,\ldots,i_r$, $j_1,\ldots,j_{r'}\in S$ and
$\delta_1,\ldots,\delta_r$, $\eps_1,\ldots,\eps_{r'}\in\{\pm 1\}$ such that
$\sigma_{i_1}^{\delta_1}\cdots \sigma_{i_r}^{\delta_r}=
\sigma_{j_1}^{\eps_1}\cdots \sigma_{j_{r'}}^{\eps_{r'}}$,
there is a canonical isomorphism in $\Ho(P^\en\mmodgr)$
$$F_{i_1}^{\delta_1}\otimes_{P_n}\cdots\otimes_{P_n} F_{i_r}^{\delta_r}\iso
F_{j_1}^{\eps_1}\otimes_{P_n}\cdots\otimes_{P_n} F_{j_{r'}}^{\eps_{r'}}$$
and these isomorphisms form a transitive system of isomorphisms.

Given $b\in B_S$, we put
$$F_b=\lim_{\substack{i_1,\ldots,i_r\\\eps_1,\ldots,\eps_r
\\b=\sigma_{i_1}^{\eps_1}\cdots\sigma_{i_r}^{\eps_r}}}
F_{i_1}^{\eps_1}\otimes_P\cdots\otimes_PF_{i_r}^{\eps_r}\in \Ho(P^\en\mmodgr).$$

\smallskip
The $2$-braid group $\CB_{(W,S)}$ is the full monoidal subcategory of
$\Ho(P^\en\mmodgr)$ with objects the $F_b$'s, with $b\in B_S$.

\subsection{Parabolic subgroups}
Let $I\subset S$ and let $W_I$ be the subgroup of $W$ generated by $I$. Let $V_I=
\bigoplus_{s\in I}ke_s$ and $P_I=k[V_I]$. 
We have $V=V_I\oplus V^I$, hence  $P=P_I\otimes k[V^I]$.
We deduce also that
$V^*=(V^I)^\perp\oplus (V_I)^\perp$, hence the composition of canonical maps
$(V^I)^\perp\hookrightarrow V^*\twoheadrightarrow (V_I)^*$ is an isomorphism.
We identify $(V^I)^\perp=\bigoplus_{s\in I}k\alpha_s$ and $(V_I)^*$ via this isomorphism.

The compositions of canonical maps
$\bigcap_{s{\not\in}I}\ker\alpha_s\to V\to V/V^I$ and
$V_I\to V\to V/V^I$ are isomorphisms: this provides an
isomorphism $V_I\iso \bigcap_{s{\not\in}I}\ker\alpha_s$. We denote by
$\rho_I:P\twoheadrightarrow P_I$ the morphism given by the composition
$V_I\iso \bigcap_{s{\not\in}I}\ker\alpha_s\hookrightarrow V$.

\smallskip
We have a functor $\gamma_I:P_I^\en\mMOD\to P^\en\mMOD$ sending $M$ to 
$k[V^I]\otimes M$, where $k[V^I]$ is the regular $k[V^I]^\en$-module and
$P^\en$ is decomposed as $P^\en=k[V^I]^\en\otimes P_I^\en$.
We obtain a fully faithful monoidal functor
$$\CB_{W_I}\to\CB_W,\ F\mapsto \gamma_I(F)=k[V^I]\otimes F.$$

\section{Hochschild cohomology and traces}
\subsection{Markov traces and $2$-traces}
\subsubsection{Markov traces}
Let $\CC{ox}$ be the poset of finite Coxeter groups, viewed as a category. The objects are
Coxeter groups $(W,S)$ and $\Hom((W,S),(W',S'))$ is the set of injective
maps $f:S\to S'$ such that $m_{f(s),f(t)}=m_{st}$ for all $s,t\in S$.
Given $s\in S$, we 
denote by $i_s:(W_{S\setminus s},S\setminus s)\to (W,S)$ the inclusion.

Let $\CF$ be a full subposet of $\CC{ox}$ closed below.

\smallskip
Let $\CH_{(W,S)}=\BZ[q^{\pm 1}]B_{(W,S)}/((T_s-1)(T_s+q))_{s\in S}$ be
the Hecke algebra of $(W,S)$.

\begin{defi}
Let $R$ be a $\BZ[t_-,t_+,q^{\pm 1}]$-module.
A Markov trace on $\CF$ is the data of a family of $\BZ[q^{\pm 1}]$-linear maps
$\tau_{(W,S)}:\CH_{(W,S)}\to R$ for $(W,S)\in\CF$ such that
\begin{itemize}
\item $\tau_S(hh')=\tau_S(h'h)$ for $h,h'\in\CH_S$
\item $\tau_S(hT_s^{\pm 1})=t_{\pm}\tau_{S\setminus s}(h)$ for 
all $s\in S$ and $h\in\CH_{S\setminus s}$.
\end{itemize}
\end{defi}

Markov traces, with a possibly more general definition,
 have been studied by Jones and Ocneanu in type $A$ \cite{Jo},
Geck-Lambropoulou
in type $B$ \cite{GeLa}, Geck in type $D$ \cite{Ge}, and
Kihara in type $I_2(n)$ \cite{Ki}. Gomi has provided a general construction of
Markov traces for Weyl groups, using Lusztig's Fourier transform \cite{Go}. More
recently, Lasy has studied Markov traces in relation with Gomi's definition and
Soergel bimodules \cite{La1}.

\subsubsection{Markov $2$-traces}
\begin{defi}
Let $\CC:\CF\to\CC{at}$ be a functor.

A Markov $2$-trace on $\CF$ (relative to $\CC$) is the
data of functors $M_{(W,S)}:\CB_{(W,S)}\to \CC_{(W,S)}$ such that the following holds
\begin{itemize}
\item $M_S(?_1\cdot ?_2)\simeq M_S(?_2\cdot ?_1)$ 
as functors $\CB_S\times\CB_S\to\CC_S$
\item $M_S(\gamma_{S\setminus s}(?)\cdot F_s^{\pm 1})\simeq T_{S,s,\pm}
\CC(i_s) M_{S\setminus s}(?)$
as functors $\CB_{S\setminus s}\to\CC_S$, for some
endofunctors $T_{S,s,\pm}$ of $\CC_S$, for all $s\in S$.
\end{itemize}
\end{defi}

One can ask in addition that the functors $T_{S,s,\pm}$ are invertible. On the other
hand, one can get a more general definition by dropping the functoriality of
$\CC$ and by requiring the existence of functors $D_{S,s,\pm}:
\CC_{S\setminus s}\to\CC_S$ such that 
$M_S(\gamma_{S\setminus s}(?)\cdot F_s^{\pm 1})\simeq D_{S,s,\pm}
M_{S\setminus s}(?)$.

\begin{rem}
Let $\bar{\CC}=\colim\CC$ and assume there are endofunctors
$T_\pm$ of $\bar{\CC}$ which restrict to $T_{S,s,\pm}$ for
any $S$ and $s\in S$.  Replacing $\CC_{(W,S)}$ by $\bar{\CC}$, one
can construct from a Markov $2$-trace another one taking value in the constant
category $\bar{\CC}$, and with fixed endofunctors $T_\pm$.
\end{rem}

The first ``trace'' condition, once formulated in the appropriate
homotopical setting, leads to a universal solution (``abelianization'')
that can be described explicitly \cite{BeNa}. It would be interesting to find a
suitable formulation of the second condition in this setting leading to a universal
solution. It would also be interesting to study functoriality with respect to
cobordism in type $A$, along the lines of \cite{KhTh} and \cite{ElKr}.

\subsubsection{From Markov $2$-traces to Markov traces}
\label{se:2totrace}
Let $\CS{oe}$ be the category of Soergel bimodules: this is
the full subcategory of $P^\en\mmodgr$ whose objects
are direct summands of direct sums of objects of the form
$\theta_{s_1}\cdots\theta_{s_n}\langle r\rangle$, for some
$s_1,\ldots,s_n\in S$ and $r\in\BZ$.

 There is a $\BZ[q^{\pm 1}]$-algebra
morphism $\CH\to K_0(\CS{oe})$ given by $T_s\mapsto [F_s]$, and that morphism
is actually an isomorphism (cf \cite[Theorem 1]{Soe} and \cite[Th\'eor\`eme
2.4]{Li}).

\smallskip
We consider now a Markov $2$-trace in the following setting.
Assume the functor $\CC$ takes values in graded triangulated categories and
$M_{(W,S)}$ is the restriction of a graded triangulated functor
$M_{(W,S)}:\Ho^b(\CS{oe}_{(W,S)})\to \CC_{(W,S)}$. In particular,
it induces a $\BZ[q^{\pm 1}]$-linear map $\CH_{(W,S)}\to 
K_0(\CC_{(W,S)})$.
Let $R=\colim_{(W,S)\in\CF}K_0(\CC_{(W,S)})$, a $\BZ[q^{\pm 1}]$-module.
Assume there are commuting endomorphisms $t_\pm$ of $R$ compatible with the
action of $[T_{S,s,\pm}]$ on $K_0(\CC_{(W,S)})$, for all $(W,S)\in\CF$ and $s\in S$,
via the canonical maps $\iota_{(W,S)}:K_0(\CC_{(W,S)})\to R$.

\smallskip
Define $\tau_{(W,S)}:B_{(W,S)}\to R$ by $\tau(b)=\iota_S([M_S(F_b)])$. We have
the following immediate proposition.

\begin{prop}
The maps $\tau_{(W,S)}$ come uniquely from $\BZ[q^{\pm 1}]$-linear
maps $\CH_{(W,S)}\to R$. They define a Markov trace on $\CF$.
\end{prop}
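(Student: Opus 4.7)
The plan is to transport each axiom of a Markov 2-trace through the Grothendieck group. I would begin by constructing the extension. Since $M_{(W,S)} : \Ho^b(\CS{oe}_{(W,S)}) \to \CC_{(W,S)}$ is a graded triangulated functor, it induces a $\BZ[q^{\pm 1}]$-linear map on $K_0$. Composing with the Soergel isomorphism $\CH_{(W,S)} \iso K_0(\CS{oe}_{(W,S)})$ (recalled in the excerpt) and with $\iota_S$ gives a $\BZ[q^{\pm 1}]$-linear map $\bar\tau_{(W,S)} : \CH_{(W,S)} \to R$. For $b \in B_{(W,S)}$ with a word expression $b = \sigma_{i_1}^{\eps_1} \cdots \sigma_{i_r}^{\eps_r}$, the Soergel isomorphism sends $T_{i_1}^{\eps_1} \cdots T_{i_r}^{\eps_r}$ to $[F_{i_1}^{\eps_1} \otimes_P \cdots \otimes_P F_{i_r}^{\eps_r}] = [F_b]$ by the transitive system of isomorphisms of \cite{Rou1}, so $\bar\tau_{(W,S)}$ agrees with $\tau_{(W,S)}$ on the image of $B_{(W,S)}$ in $\CH_{(W,S)}$. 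Uniqueness of the extension follows because $\{T_w\}_{w \in W}$ is a $\BZ[q^{\pm 1}]$-basis whose elements are products of the $T_s$, and so lie in that image.

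For the trace condition, $\BZ[q^{\pm 1}]$-bilinearity reduces $\bar\tau_S(hh') = \bar\tau_S(h'h)$ to the case $h = T_a$, $h' = T_b$ with $a, b \in B_{(W,S)}$; by the identification above this becomes $\iota_S([M_S(F_a \otimes_P F_b)]) = \iota_S([M_S(F_b \otimes_P F_a)])$. The first Markov 2-trace axiom, evaluated at $(F_a, F_b) \in \CB_{(W,S)} \times \CB_{(W,S)}$, supplies an isomorphism $M_S(F_a \otimes_P F_b) \simeq M_S(F_b \otimes_P F_a)$ in $\CC_{(W,S)}$, yielding the required equality in $R$.

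For the Markov condition, it likewise suffices to compute $\bar\tau_S(T_b T_s^{\pm 1})$ for $b \in B_{(W_{S \setminus s}, S \setminus s)}$. Under the Soergel isomorphism the inclusion $\CH_{S \setminus s} \to \CH_S$ corresponds to $\gamma_{S \setminus s}$, so $T_b T_s^{\pm 1}$ maps to $[\gamma_{S \setminus s}(F_b) \otimes_P F_s^{\pm 1}]$. The second Markov 2-trace axiom evaluated at $F_b$ yields an isomorphism $M_S(\gamma_{S \setminus s}(F_b) \cdot F_s^{\pm 1}) \simeq T_{S,s,\pm} \CC(i_s) M_{S \setminus s}(F_b)$. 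Taking $K_0$, applying $\iota_S$, and using the assumed compatibility of $t_\pm$ with $[T_{S,s,\pm}]$ together with the colimit identity $\iota_S \circ K_0(\CC(i_s)) = \iota_{S\setminus s}$, the right-hand side becomes $t_\pm \iota_{S \setminus s}([M_{S \setminus s}(F_b)]) = t_\pm \bar\tau_{S \setminus s}(T_b)$. This is in essence a bookkeeping argument (hence the author's ``immediate''); the step that needs the most care is the matching between products in $\CH_{(W,S)}$ and tensor products of the $F_b$'s under the Soergel isomorphism, which is exactly what the transitive system of \cite{Rou1} provides.
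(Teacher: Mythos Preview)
Your proposal is correct and is exactly the routine verification the paper has in mind; the paper gives no proof beyond calling the proposition ``immediate'', and your argument---factoring through the Soergel isomorphism and then reading off the two Markov-trace axioms from the two Markov 2-trace axioms at the level of $K_0$---is the intended one. The only cosmetic point is that writing ``$T_a$'' for $a\in B_{(W,S)}$ is mildly nonstandard (one usually reserves $T_w$ for $w\in W$), but your meaning is clear and the reduction to braid-group images via the basis $\{T_w\}$ is valid.
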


\subsection{Hochschild homology}
\subsubsection{Main Theorem}
We put $\HH_i=\HH_i^{(W,S)}=\Tor_i^{P^{\en}}(P,-):P^{\en}\mmodgr\to
P\mmodgr$. This gives rise to functors
$\HH_i:\Ho^b(P^{\en}\mmodgr)\to \Ho^b(P\mmodgr)$ and to a functor
$\HH_*:\Ho^b(P^{\en}\mmodgr)\to \Ho^b(P\mmodgr)^{\BZ}$.

Given $I\subset S$, we have a functor $\rho_I^*:D^b(P_I\mmodgr)^\BZ\to
D^b(P\mmodgr)^\BZ$. This defines a functor from $\CC{ox}$ to graded
triangulated categories $(W,S)\mapsto D^b(P_S\mmodgr)^\BZ$. Our grading here is the one
coming from $P_S\mmodgr$.

The following theorem is a consequence of Theorem \ref{th:traceHH} below.
\begin{thm}
\label{th:homotopy}
The functors $\HH_*^S$ define a Markov $2$-trace on finite Coxeter groups with
value in $\CC_S=D^b(P_S\mmodgr)^\BZ$ and with
$T_{S,s,+}=[-1]\{1\}$ and
$T_{S,s,-}=\Id$.
\end{thm}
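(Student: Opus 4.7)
The theorem requires verifying the two axioms defining a Markov $2$-trace for $M_S := \HH_*^S$. The first (trace symmetry) is a formal consequence of the cyclic invariance of Hochschild homology: for bounded complexes $C_1, C_2$ of graded $P^{\en}$-modules,
$$\HH_*^S(C_1 \otimes_P C_2) \cong C_1 \otimes^L_{P^{\en}} C_2 \cong C_2 \otimes^L_{P^{\en}} C_1 \cong \HH_*^S(C_2 \otimes_P C_1),$$
where the middle isomorphism uses that $P$ is commutative, so that $P^{\en}$ agrees with $(P^{\en})^{\opp}$. These isomorphisms are canonical and natural in both arguments, yielding the first axiom.

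The Markov axiom is the main content and is essentially the statement of the forthcoming Theorem \ref{th:traceHH}. Fix $s \in S$ and $I = S \setminus s$. By additivity and triangulated extension, it suffices to verify, naturally in a graded $P_I^{\en}$-module $X$,
\begin{align*}
\HH_*^S\bigl(\gamma_I(X) \otimes_P F_s^{-1}\bigr) &\simeq \rho_I^* \HH_*^I(X), \\
\HH_*^S\bigl(\gamma_I(X) \otimes_P F_s\bigr) &\simeq \rho_I^* \HH_*^I(X)[-1]\{1\}.
\end{align*}
Expanding $F_s^{\pm 1}$ and applying $\HH_*^S$ yields a two-term total complex whose terms involve $\HH_*^S(\gamma_I(X))$ and $\HH_*^S(\gamma_I(X) \otimes_P \theta_s) = \HH_*^S(\gamma_I(X) \otimes_{P^s} P)$. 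For the first, the factorization $P^{\en} = P_I^{\en} \otimes k[V^I]^{\en}$ together with $\gamma_I(X) = k[V^I] \otimes X$ and a K\"unneth reduction identify it with $\HH_*^I(X) \otimes \HH_*(k[V^I])$, where by Hochschild--Kostant--Rosenberg $\HH_*(k[V^I])$ is the exterior algebra on $(V^I)^*$ tensored with $k[V^I]$. The asymmetric shifts $T_{S,s,\pm}$ ultimately come from the asymmetric internal shifts $\langle \pm 1\rangle$ and the asymmetric placement of the free term $P\langle \pm 1\rangle$ in the definitions of $F_s$ and $F_s^{-1}$.

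The main obstacle is the explicit cancellation showing that the excess Koszul--exterior contribution from $V^I$, which a priori makes $\HH_*^S(\gamma_I(X))$ much larger than the target $\rho_I^* \HH_*^I(X)$, is precisely annihilated when combined with $\HH_*^S(\gamma_I(X) \otimes_{P^s} P)$ via the differential of $F_s^{\pm 1}$. The cleanest route is a Shapiro-type change-of-rings argument for $P^s \subset P$ applied to the latter term, combined with careful tracking of the three gradings (cohomological, Hochschild, internal polynomial) and of naturality in $X$, so that the resulting identification lifts to a natural transformation of functors on $\CB_{W_I}$. This is what Theorem \ref{th:traceHH} will accomplish.
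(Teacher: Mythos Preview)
Your proposal and the paper agree on the essential point: the theorem is an immediate consequence of Theorem~\ref{th:traceHH}, and your cyclic-invariance argument for the trace axiom is correct and is exactly what the paper does (in resolution language) in the first paragraph of the proof of Theorem~\ref{th:traceHH}.

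Where you diverge is in the sketch of the Markov axiom. You propose to apply $\HH_*^S$ termwise to the two-step complex $\gamma_I(X)\otimes_P F_s^{\pm1}$, then separately analyze $\HH_*^S(\gamma_I(X))$ via K\"unneth/HKR for the one-variable factor $k[V^I]$ and $\HH_*^S(\gamma_I(X)\otimes_{P^s}P)$ via a ``Shapiro-type'' change of rings along $P^s\subset P$, and finally exhibit a cancellation between the extra exterior class and the $\theta_s$-term. The paper does \emph{not} follow this route. It never invokes HKR or a Shapiro lemma for $P^s\subset P$; instead it tensors with the explicit one-variable Koszul resolution $X$ of $k[z_s]$ to produce a $2\times 2$ bicomplex $F_s\otimes_{k[z_s]^{\en}}X$, and then uses Lemma~\ref{le:splittingF}, the splitting $\theta_s\cong P\oplus Ps$ as $P_{S\setminus s}^{\en}$-modules, to break this bicomplex into pieces $Y_1$ and $Y_2$. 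The piece $Y_2$ is shown to be contractible after applying $H^i_{d_2}$, and $Y_1$ reduces directly to $\rho_{S\setminus s}^*K^{i+1}_{S\setminus s}(M)[-1]$. The asymmetry between $T_{S,s,+}$ and $T_{S,s,-}$ arises not from the placement of the free term in $F_s^{\pm1}$ as you suggest, but from which of the two pieces $Y_1$, $Y_2$ is acyclic.

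Your outline is not wrong in spirit, and a Shapiro/K\"unneth argument could presumably be pushed through, but the step you flag as ``the main obstacle'' --- the precise cancellation --- is exactly the step you leave unproven, and the mechanism the paper actually uses (the parabolic splitting of $\theta_s$) is different from the one you name. If you intend your proposal to stand on its own rather than as a pointer to Theorem~\ref{th:traceHH}, that cancellation needs to be carried out explicitly.
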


Passing to homology, we obtain the following result.
\begin{cor}
\label{cor:Khovanov}
The functors $H^*\HH_*^S$ define a Markov $2$-trace on finite Coxeter groups
with value in $\CC_S=\bigl((P_S\mmodgr)^\BZ\bigr)^{\BZ}$ and with
$T_{S,s,+}=[-1]\{1\}$ and
$T_{S,s,-}=\Id$.
\end{cor}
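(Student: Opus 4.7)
The plan is to derive this by applying cohomology levelwise to the Markov $2$-trace constructed in Theorem~\ref{th:homotopy}. The cohomology functor $H^*:\Ho^b(P_S\mmodgr)\to (P_S\mmodgr)^\BZ$ is not triangulated, but it sends isomorphisms to isomorphisms, commutes with the grading shift $\{1\}$, and intertwines the triangulated shift $[-1]$ with shift-by-$(-1)$ of the new $\BZ$-grading (which I will continue to denote $[-1]$). Applied pointwise along the Hochschild-degree $\BZ$-family, it yields a functor $H^*:D^b(P_S\mmodgr)^\BZ \to \bigl((P_S\mmodgr)^\BZ\bigr)^\BZ$, which composed with $\HH_*^S$ produces the candidate $M_{(W,S)}=H^*\HH_*^S$.

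For the trace condition, one applies $H^*$ to the natural isomorphism $\HH_*^S(?_1\cdot ?_2)\simeq \HH_*^S(?_2\cdot ?_1)$ furnished by Theorem~\ref{th:homotopy}; since $H^*$ is a functor, it preserves this natural isomorphism. For the Markov condition, one likewise applies $H^*$ to the corresponding isomorphism from Theorem~\ref{th:homotopy}; under the conventions above, the endofunctors $T_{S,s,\pm}$ retain their descriptions $[-1]\{1\}$ and $\Id$ once $[-1]$ is reinterpreted as the shift in the new (cohomological) $\BZ$-grading.

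Functoriality in $\CF$ reduces to the observation that $\rho_I^*$ is exact (it is restriction of scalars along the ring surjection $P\twoheadrightarrow P_I$), so it commutes with $H^*$. Hence the compatibility with parabolic inclusions $i_s$ transfers verbatim from Theorem~\ref{th:homotopy} to the cohomology level, completing the verification of Definition~3.2.

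The proof has essentially no substantive content beyond the functoriality of $H^*$ and the exactness of $\rho_I^*$; the only real obstacle is bookkeeping, namely keeping track of which of the two $\BZ$-indices in $\bigl((P_S\mmodgr)^\BZ\bigr)^\BZ$ records the Hochschild degree (inherited from $\HH_*^S$) and which records the cohomological degree extracted by $H^*$.
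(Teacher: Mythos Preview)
Your proposal is correct and matches the paper's approach: the paper offers no explicit proof of this corollary, stating only ``Passing to homology, we obtain the following result,'' and your argument spells out exactly that passage (functoriality of $H^*$, exactness of $\rho_I^*$, and the compatibility of the shifts). There is nothing to add.
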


The construction of \S\ref{se:2totrace} provides a Markov trace, recovering a result
of Lasy \cite{La1,La2}. Note that Webster and Williamson \cite{WeWi} have shown that for
finite Weyl groups, this is actually Gomi's trace, as conjectured by J.~Michel.
That has been shown to hold also in type $I_2(n)$ by Lasy \cite{La1}.

\begin{cor}[Lasy]
The following defines a Markov trace on finite Coxeter groups:
$$\CB_{(W,S)}\ni b\mapsto \sum_{d,i,j} (-1)^j
\dim H^j(\HH_i^S(F_b))_d q^{-d}t^{-i}\in\BZ[q^{\pm 1},t^{\pm 1}]$$
corresponding to $t_+=-t$ and $t_-=1$.
\end{cor}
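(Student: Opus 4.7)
The plan is to apply the construction of \S\ref{se:2totrace} to the Markov $2$-trace of Theorem~\ref{th:homotopy}, with target $R = \BZ[q^{\pm 1}, t^{\pm 1}]$. For each $(W,S) \in \CF$, I would define a $\BZ[q^{\pm 1}]$-linear map
$$\phi_S: K_0(\CC_S) \to R, \qquad [C^\bullet] \mapsto \sum_{d,i,j} (-1)^j \dim H^j(C^i)_d\, q^{-d}\, t^{-i},$$
where $\CC_S = D^b(P_S\mmodgr)^{\BZ}$, $C^\bullet = (C^i)_{i \in \BZ}$ is a $\BZ$-family of bounded complexes of finitely generated graded $P_S$-modules, $j$ ranges over cohomological degrees, and $d$ over internal $P_S$-gradings. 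This is a virtual graded dimension of a bounded complex, hence well-defined on $K_0$.

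Next I would verify compatibility of the $\phi_S$ across $\CF$. The transition functor $\CC(i_s) = \rho_{S\setminus s}^*$ sends a graded $P_{S\setminus s}$-module to the same underlying graded $k$-module, now viewed as a $P_S$-module via the surjection $\rho_{S\setminus s}: P \twoheadrightarrow P_{S\setminus s}$; thus graded dimensions are preserved and $\phi_S \circ [\rho_{S\setminus s}^*] = \phi_{S\setminus s}$. Consequently the $\phi_S$ glue to a single $\BZ[q^{\pm 1}]$-linear map $\phi: \colim_{(W,S) \in \CF} K_0(\CC_S) \to R$.

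I would then identify $t_\pm$ by tracking the action of $T_{S,s,\pm}$ on $\phi$. The outer shift $\{1\}$ relabels the family by $(M^i) \mapsto (M^{i+1})$, which contributes a factor of $t$ under the weight $t^{-i}$. The triangulated shift $[-1]$ (applied entrywise) replaces $H^j$ by $H^{j-1}$, flipping the alternating sum by $-1$. Hence $T_{S,s,+} = [-1]\{1\}$ acts as multiplication by $-t$ while $T_{S,s,-} = \Id$ acts trivially, giving $t_+ = -t$ and $t_- = 1$. Applying the proposition of \S\ref{se:2totrace} then produces a $\BZ[q^{\pm 1}]$-linear map $\CH_{(W,S)} \to R$ satisfying the Markov conditions, and the formula for the resulting trace is obtained by substituting $C^\bullet = \HH_*^S(F_b)$, which yields exactly the stated expression.

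The principal substantive point is that the sum defining $\phi_S$ actually lies in $\BZ[q^{\pm 1}, t^{\pm 1}]$ rather than in some completion: boundedness in $i$ is forced by $\HH_i = \Tor_i^{P^\en}$ vanishing outside a finite range depending on $|S|$, boundedness in $j$ follows from $F_b$ being a bounded complex of Soergel bimodules, and the fact that each graded $q$-Poincar\'e series is a Laurent polynomial rests on the finiteness properties of Hochschild cohomology of Soergel bimodules. Granting these, the rest is routine bookkeeping with Euler characteristics on $K_0$, and the corollary follows from Theorem~\ref{th:homotopy} together with the proposition of \S\ref{se:2totrace}.
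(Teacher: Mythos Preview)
Your argument is correct and is exactly the paper's approach: the paper offers no proof beyond the sentence immediately preceding the corollary, which asserts that applying the construction of \S\ref{se:2totrace} to the Markov $2$-trace of Theorem~\ref{th:homotopy} yields the stated Markov trace, and your write-up simply unpacks this by exhibiting the map out of $K_0(\CC_S)$ and reading off $t_\pm$ from $T_{S,s,\pm}=[-1]\{1\}$ and $\Id$. Your caution in the final paragraph about the target ring is well-placed---the Hilbert series of a nonzero finitely generated graded $P_S$-module is a genuine power series rather than a Laurent polynomial, so the displayed sum lands a priori only in a completion (or localization) of $\BZ[q^{\pm 1},t^{\pm 1}]$---but the paper does not address this point either.
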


\subsubsection{Shift adjustment}
By shifting suitably the invariants, we can get rid of the
automorphisms $T_{S,s,\pm}$, but we lose functoriality (it would be interesting to
see if functoriality with respect to an appropriate notion of cobordisms can be
implemented).
In order to do this, we need to use $\frac{1}{2}\BZ$-complexes.

Given $\CA$ an additive category, 
the category of $\frac{1}{2}$-complexes in $\CA$ has
objects $(C^i,d^i)_{i\in\frac{1}{2}\BZ}$ where the differential
has degree $1$, and morphisms are $\frac{1}{2}\BZ$-graded maps commuting with the
differential. Its homotopy category is denoted by $\Ho_{\frac{1}{2}}(\CA)$ and,
when $\CA$ is an abelian category, its derived category by
$D_{\frac{1}{2}}(\CA)$.

\begin{cor}
Given $(W,S)$ a finite Coxeter group and $b\in B_S$,
let 
$$N_S(F_b)=
\HH_{*-\frac{|S|+l(b)}{2}}^S(F_b)\left[\frac{|S|+l(b)}{2}\right]
\in D^b_{\frac12}(P_S\mmodgr)^{\frac{1}{2}\BZ}.$$

\begin{itemize}
\item
We have
$N_S(F_bF_{b'})\simeq N_S(F_{b'}F_b)$ for all $b,b'\in B_S$.
\item
Given $s\in S$ and $b\in B_{S\setminus s}$, we have
$N_S(\gamma_{S\setminus s}(F_b)F_s^{\pm 1})\simeq \rho_{S\setminus s}^*N_{S\setminus s}(F_b)$.
\end{itemize}
\end{cor}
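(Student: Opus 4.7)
The plan is to reduce the corollary to Theorem~\ref{th:homotopy} by absorbing the auxiliary endofunctors $T_{S,s,\pm}$ into the shift prescription that defines $N_S$. The key bookkeeping fact is that $l:B_S\to\BZ$ is a group morphism, so $l(bb')=l(b'b)=l(b)+l(b')$ and $l(\gamma_{S\setminus s}(b)\sigma_s^{\pm 1})=l(b)\pm 1$. Writing $r_S(b)=\frac{|S|+l(b)}{2}$, so that $N_S(F_b)=\HH_{*-r_S(b)}^S(F_b)[r_S(b)]$, these identities predict exactly how $r_S$ transforms under the two Markov moves. A short parity check shows that the coset of $\BZ$ in $\frac{1}{2}\BZ$ in which $r_S$ lives is consistent throughout, which is what justifies working in $D^b_{\frac{1}{2}}$ in the first place.

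For the trace property, the exponent $r_S(bb')=r_S(b'b)$ is symmetric in $b$ and $b'$, so applying the common shift $(\cdot)_{*-r}[r]$ to the isomorphism $\HH_*^S(F_bF_{b'})\simeq\HH_*^S(F_{b'}F_b)$ supplied by Theorem~\ref{th:homotopy} immediately yields $N_S(F_bF_{b'})\simeq N_S(F_{b'}F_b)$.

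For the Markov move, set $r=r_S(\gamma_{S\setminus s}(b)\sigma_s^{\pm 1})$ and $r'=r_{S\setminus s}(b)$; a direct computation gives $r=r'+1$ in the $+$ case and $r=r'$ in the $-$ case. Plugging Theorem~\ref{th:homotopy} into the definition of $N_S$, the factor $T_{S,s,+}=[-1]\{1\}$ is exactly absorbed into the shift mismatch: the $\{1\}$ compensates the passage from $*-r$ to $*-r'$ on the Hochschild index, and the $[-1]$ combines with $[r]$ to produce $[r-1]=[r']$. In the $-$ case $T_{S,s,-}=\Id$ and $r=r'$, so no adjustment is needed. In both cases the expression collapses to $\rho_{S\setminus s}^*N_{S\setminus s}(F_b)$. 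I do not anticipate any real obstacle; the entire argument is careful bookkeeping of shifts layered on top of Theorem~\ref{th:homotopy}, and the main point is simply to verify that the half-integer shifts have been chosen so that the two sources of asymmetry between the $+$ and $-$ Markov moves (namely $T_{S,s,+}$ versus $T_{S,s,-}$, and $l$ versus $-l$) cancel exactly.
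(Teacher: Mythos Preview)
Your proposal is correct and is precisely the shift bookkeeping the paper has in mind; the paper states this corollary without an explicit proof, leaving it as an immediate consequence of Theorem~\ref{th:homotopy} once the normalizing shifts are introduced. Your computation that $r_S(\gamma_{S\setminus s}(b)\sigma_s^{\pm 1})-r_{S\setminus s}(b)$ equals $1$ or $0$ exactly matches the $T_{S,s,\pm}$ prescribed there, so nothing is missing.
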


\subsection{Khovanov-Rozansky homology of links}
We specialize now to the case of the classical Artin braid groups considered by
Khovanov in \cite{Kh}.
Note that
Khovanov conjectured ten years ago that the $F_b$'s should give rise to
interesting link invariants.

\smallskip
We take here $V=(\bigoplus_{i=1}^nke_i)/k(e_1+\cdots e_n)$, the reflection
representation of $W=\GS_n$, with $S=\{(1,2),\ldots,(n-1,n)\}$. Let
$P_n=k[V]=k[\alpha_1,\ldots,\alpha_{n-1}]$, where $\alpha_i=X_{i+1}-X_i$.
We put $B_n=B_{(W,S)}$.

\smallskip
Let $P_\infty=\lim_n P_n$, where the limit is taken over the morphisms
of $P_n$-algebras $\rho_n:P_{n+1}\to P_n,\ \alpha_n\mapsto 0$.
This provides
functors between derived categories
$$\cdots\to D^b(P_n\mmodgr)\to D^b(P_{n+1}\mmodgr)\to\cdots
\to D^b(P_\infty\mmodgr).$$

\begin{thm}
\label{th:homotopyBn}
The assignment to $b\in B_{n+1}$ of
the isomorphism class of $\HH_{*-\frac{n+l(b)}{2}}(F_b)[\frac{n+l(b)}{2}]$ in
$D^b_{\frac12}(P_\infty\mmodgr)^{\frac{1}{2}\BZ}$ defines an invariant of oriented links.
\end{thm}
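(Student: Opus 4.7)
The plan is to appeal to the Alexander--Markov theorem, which identifies oriented links with braids in $\bigsqcup_{n\ge 1}B_n$ modulo conjugation in each $B_n$ together with the positive and negative Markov stabilizations $b\in B_n\leftrightarrow b\sigma_n^{\pm 1}\in B_{n+1}$. Consequently, to prove that the assignment is a link invariant, it suffices to verify (i) conjugation invariance inside each $B_{n+1}$, and (ii) invariance under the two stabilization moves, viewed after passage to $D^b_{\frac12}(P_\infty\mmodgr)^{\frac{1}{2}\BZ}$.

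For (i), I would invoke the first bullet of the preceding corollary: $N_S(F_{b_1}F_{b_2})\simeq N_S(F_{b_2}F_{b_1})$. Applied with $b_1=a$ and $b_2=ba^{-1}$, this yields $N_S(F_{aba^{-1}})\simeq N_S(F_a F_{ba^{-1}})\simeq N_S(F_{ba^{-1}}F_a)\simeq N_S(F_b)$. Since $l$ is a group homomorphism we have $l(aba^{-1})=l(b)$, so the normalization shifts $\tfrac{|S|+l(\cdot)}{2}$ on the two sides agree, and the conjugation isomorphism descends to the invariant of Theorem~\ref{th:homotopyBn}.

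For (ii), I would realize the standard Markov inclusion $B_n\hookrightarrow B_{n+1}$ via the parabolic embedding $\gamma_I$ for $I=S_{n+1}\setminus\{s_n\}$, so that the stabilized braid $b\sigma_n^{\pm 1}\in B_{n+1}$ is represented by $\gamma_I(F_b)F_{s_n}^{\pm 1}$. The second bullet of the corollary then delivers $N_{S_{n+1}}(\gamma_I(F_b)F_{s_n}^{\pm 1})\simeq \rho_I^*\,N_{S_n}(F_b)$. The functor $\rho_I^*$ is exactly a transition map in the chain $D^b(P_n\mmodgr)\to D^b(P_{n+1}\mmodgr)\to\cdots\to D^b(P_\infty\mmodgr)$, so the two sides become canonically isomorphic after passage to this colimit category. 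Both signs of stabilization are treated uniformly, because the half-integer shift in the definition of $N$ is precisely designed to absorb the asymmetry $T_{S,s,+}=[-1]\{1\}$ versus $T_{S,s,-}=\Id$ appearing at the unshifted level.

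The main obstacle I anticipate is purely bookkeeping rather than conceptual: one has to verify carefully that the cohomological and $q$-grading shifts encoded in $\tfrac{|S|+l(b)}{2}$ balance exactly under each Markov move (where $|S|$ grows by one and $l(b)$ changes by $\pm 1$), and that the structural functors $D^b(P_n\mmodgr)\to D^b(P_\infty\mmodgr)$ intertwine $\rho_I^*$ with the canonical transition functors defining $P_\infty$. Once these compatibilities are in place, Theorem~\ref{th:homotopyBn} is an immediate consequence of combining the Markov $2$-trace property of the preceding corollary with the Alexander--Markov classification of oriented links.
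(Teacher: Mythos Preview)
Your proposal is correct and matches the paper's intended argument: the paper states Theorem~\ref{th:homotopyBn} without an explicit proof, treating it as an immediate specialization to type~$A$ of the preceding corollary on $N_S$, combined with the Alexander--Markov theorem. Your write-up makes this explicit, with the right identification of the Markov embedding $B_n\hookrightarrow B_{n+1}$ with the parabolic $\gamma_{S\setminus\{s_n\}}$ and of $\rho_{S\setminus\{s_n\}}^*$ with the transition functor to $D^b(P_\infty\mmodgr)$; no further ideas are needed.
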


Passing to homology, we recover the following result of Khovanov \cite{Kh}. Khovanov
identifies the invariant as the Khovanov-Rozansky homology, a categorification of
the HOMFLYPT polynomial.

\begin{thm}[Khovanov]
\label{th:Khovanov}
The assignment to $b\in B_{n+1}$ of
$$X_b=(t_2t_3^{-1})^{(n+l(b))/2}
\sum_{d,i,j}
\dim H^j(\HH_i(F_b))_d t_1^dt_2^it_3^j\in\BN[t_1^{\pm 1},t_2^{\pm 1/2},t_3^{\pm
1/2}]$$
defines an invariant of oriented links.
\end{thm}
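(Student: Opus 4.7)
The plan is to derive Theorem~\ref{th:Khovanov} directly from Theorem~\ref{th:homotopyBn} by recognizing $X_b$ as the triply graded Poincar\'e polynomial of the object
$$N(b):=\HH_{*-(n+l(b))/2}(F_b)\bigl[(n+l(b))/2\bigr]\in D^b_{\frac12}(P_\infty\mmodgr)^{\frac{1}{2}\BZ}$$
whose isomorphism class is, by Theorem~\ref{th:homotopyBn}, an oriented link invariant. Since Poincar\'e polynomials depend only on isomorphism classes in this category, it suffices to verify the equality $X_b=P(N(b))$ for a suitable Poincar\'e polynomial functional $P$.

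First I would fix conventions for the three variables: $t_1$ tracks the internal $P_\infty$-grading $d$, $t_2$ the (possibly half-integer) Hochschild index $i$, and $t_3$ the cohomological index $j$. For $C\in D^b_{\frac12}(P_\infty\mmodgr)^{\frac{1}{2}\BZ}$ with tri-degreewise finite-dimensional cohomology, set
$$P(C)=\sum_{d,i,j}\dim H^j(C_i)_d\,t_1^d t_2^i t_3^j\in\BN[t_1^{\pm 1},t_2^{\pm 1/2},t_3^{\pm 1/2}].$$
This is manifestly an isomorphism invariant of $C$, and degreewise finite-dimensionality is guaranteed because $F_b$ is a bounded complex of Soergel bimodules, each of which is finitely generated in each internal grading.

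Next I would track the two shifts entering $N(b)$. Reindexing $\HH_*\mapsto\HH_{*-k}$ replaces $t_2^i$ by $t_2^{i+k}$ and so multiplies $P$ by $t_2^k$; the cohomological shift $[k]$, sending $H^j$ to $H^{j+k}$, multiplies $P$ by $t_3^{-k}$. Taking $k=(n+l(b))/2$ yields
$$P(N(b))=(t_2 t_3^{-1})^{(n+l(b))/2}\sum_{d,i,j}\dim H^j(\HH_i(F_b))_d\,t_1^d t_2^i t_3^j=X_b.$$
Combined with Theorem~\ref{th:homotopyBn}, this shows $X_b$ is an oriented link invariant; non-negativity of the coefficients and membership in $\BN[t_1^{\pm 1},t_2^{\pm 1/2},t_3^{\pm 1/2}]$ are then automatic.

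The substantive work for this statement sits upstream, in Theorems~\ref{th:homotopy} and~\ref{th:homotopyBn}: one must check the two Markov $2$-trace axioms (conjugation invariance of $\HH_*(F_b)$ and the twisted stabilization isomorphisms with $T_{S,s,+}=[-1]\{1\}$ and $T_{S,s,-}=\Id$), and then verify that the half-integer shifts defining $N(b)$ absorb these twists so that stabilization becomes an honest isomorphism under $\rho_{S\setminus s}^*$, invoking Markov's theorem to pass from braid moves to oriented link equivalence. Once all of that is granted, the present corollary reduces to the graded bookkeeping above.
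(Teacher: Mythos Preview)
Your proposal is correct and follows exactly the paper's approach: the paper simply says ``Passing to homology, we recover the following result of Khovanov,'' and you have spelled out the graded bookkeeping that makes this precise. There is nothing materially different between the two.
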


Note that $X_1=1$, where $1\in B_1$ corresponds to the trivial knot.

\smallskip
We define now a two variables invariant 
$Y_b=(X_b)_{|t_3^{1/2}=\sqrt{-1}}$.
The following corollary shows that $Y_b$ is the HOMFLYPT
polynomial, as expected.

\begin{cor}[Khovanov]
\label{cor:homfly}
Given $b,b'\in B_n$ and $r\in\{1,\ldots,n-1\}$, we have
$$t_1^{-1/2}t_2^{1/2}Y_{b\sigma_r^{-1}b'}+
t_1^{1/2}t_2^{-1/2}Y_{b\sigma_rb'}=\sqrt{-1}(t_1^{-1/2}-t_1^{1/2})Y_{bb'}.$$
\end{cor}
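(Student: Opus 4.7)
The plan is to produce a distinguished triangle at the crossing, insert it into $F_{b\sigma_r^{\pm 1}b'}$ by tensoring with $F_b$ and $F_{b'}$, apply $\HH_*$, and translate the resulting $K_0$ identity into the skein via the definition of $Y_b$.

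Brutal truncation of the two-term complexes $F_s=(\theta_s\langle 1\rangle\to P_n\langle 1\rangle)$ and $F_s^{-1}=(P_n\langle -1\rangle\to\theta_s)$ produces short exact sequences of complexes, hence distinguished triangles in $\Ho^b(P_n^\en\mmodgr)$:
$$\theta_s\langle 1\rangle\to F_s\to P_n\langle 1\rangle[-1]\rightsquigarrow,\qquad P_n\langle -1\rangle[1]\to F_s^{-1}\to\theta_s\rightsquigarrow.$$
Soergel bimodules are free as one-sided $P_n$-modules, so tensoring on the left by the bounded complex $F_b$ and on the right by $F_{b'}$ over $P_n$ is exact in each slot and preserves distinguished triangles. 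Using $F_b\otimes_{P_n}F_{b'}\simeq F_{bb'}$ in $\CB_{(W,S)}$, we obtain
$$F_b\theta_s F_{b'}\langle 1\rangle\to F_{b\sigma_r b'}\to F_{bb'}\langle 1\rangle[-1]\rightsquigarrow,\qquad F_{bb'}\langle -1\rangle[1]\to F_{b\sigma_r^{-1}b'}\to F_b\theta_s F_{b'}\rightsquigarrow.$$

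Apply the graded triangulated functor $\HH_*\colon\Ho^b(P_n^\en\mmodgr)\to D^b(P_n\mmodgr)^{\BZ}$ and pass to $K_0$, denoting by $q$ the automorphism induced by $\langle 1\rangle$. Each triangle becomes a $K_0$ identity; eliminating the common term $[\HH_*(F_b\theta_s F_{b'})]$ yields the localized Hecke-type relation
$$[\HH_*(F_{b\sigma_r b'})]-q[\HH_*(F_{b\sigma_r^{-1}b'})]=(1-q)[\HH_*(F_{bb'})].$$

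Finally, unwind $Y_b=(t_2t_3^{-1})^{(n+l(b))/2}\sum_{d,i,j}\dim H^j(\HH_i(F_b))_d t_1^d t_2^i t_3^j$ at $t_3^{1/2}=\sqrt{-1}$. The specialization sets $t_3=-1$, turning the Poincar\'e sum into the Euler characteristic $\chi_b:=[\HH_*(F_b)]$ in variables $t_1,t_2$ (with $q=t_1^{-1}$), and makes the prefactor $(-\sqrt{-1})^{n+l(b)}t_2^{(n+l(b))/2}$. Since $l(b\sigma_r^{\pm 1}b')=l(bb')\pm 1$, the prefactor ratios become $\mp\sqrt{-1}\,t_2^{\pm 1/2}$, which combine with the explicit factors $t_1^{\mp 1/2}t_2^{\pm 1/2}$ on the left-hand side of the skein to cancel all $t_2$-dependence and produce an overall $\sqrt{-1}$. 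Multiplying the derived $K_0$ identity by $-t_1^{1/2}$ converts it to $t_1^{-1/2}\chi_{b\sigma_r^{-1}b'}-t_1^{1/2}\chi_{b\sigma_r b'}=(t_1^{-1/2}-t_1^{1/2})\chi_{bb'}$, which is precisely what the skein requires after the prefactor comparison. The essential content is geometric --- distinguished triangles for $F_s^{\pm 1}$ and their behavior under tensoring by $F_b,F_{b'}$; the main obstacle is only the final bookkeeping of square roots and half-integer $t_2$-exponents from the $t_3$-specialization.
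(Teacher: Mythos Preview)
The paper does not supply its own proof of this corollary; it is simply stated and attributed to Khovanov. Your argument is the standard one and is essentially correct: the two brutal-truncation triangles for $F_s$ and $F_s^{-1}$, tensored on both sides by $F_b$ and $F_{b'}$ and pushed through the triangulated functors $\HH_i$ (which are triangulated on $\Ho^b$ precisely because they arise, as in the paper, by applying the additive functor $\Tor_i^{P^\en}(P,-)$ termwise), give in $K_0$ the Hecke relation $[\HH_*(F_{b\sigma_r b'})]-q[\HH_*(F_{b\sigma_r^{-1}b'})]=(1-q)[\HH_*(F_{bb'})]$. Your normalization computation with $q=t_1^{-1}$ and $t_3^{1/2}=\sqrt{-1}$ then checks out and delivers the skein relation as claimed.

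One small correction that does not affect the outcome: your displayed triangles have the sub- and quotient-complexes interchanged. For a two-term complex $C=(C^0\xrightarrow{d}C^1)$ the brutal truncation gives the short exact sequence $0\to C^1[-1]\to C\to C^0\to 0$; there is no chain map $C^0\to C$ unless $d=0$. So the correct triangles are
\[
P_n\langle 1\rangle[-1]\to F_s\to \theta_s\langle 1\rangle\rightsquigarrow,
\qquad
\theta_s\to F_s^{-1}\to P_n\langle -1\rangle[1]\rightsquigarrow.
\]
Since only the resulting identity in $K_0$ is used, and that identity is insensitive to which rotation of the triangle one writes down, your derived relation and the subsequent bookkeeping remain valid.
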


\section{Proofs}
\subsection{Multiple complexes}

\subsubsection{Total objects}
For a more intrinsic approach to this section, cf \cite[\S 1.1]{De}.

Let $\CA$ be an additive category and $n\ge 0$.
We denote by $\Comp(\CA)$ the category of complexes of objects of $\CA$.
The category $n\mComp(\CA)$
of $n$-fold complexes is defined inductively by
$n\mComp(\CA)=\Comp((n-1)\mComp(\CA))$ and $0\mComp(\CA)=\CA$.
Its objects are families $(X,d_1,\ldots,d_n)$ where 
$X$ is an object of $\CA$ graded by $\BZ^n=\bigoplus_{i=1}^n \BZ e_i^*$,
$d_i$ is a graded map of degree $e_i$ and $d_i^2=[d_i,d_j]=0$ for
all $i,j$.

Given $X$ an $n$-complex and $i\in\{1,\ldots,n\}$, we define 
$Y=X[e_i]$ as the $n$-complex given by $Y^b=X^{e_i+b}$ and
differentials $\partial_i^b=(-1)^{\delta_{ij}} d_j^{e_i+b}$.

\medskip
Let $f:\{1,\ldots,n\}\to \{1,\ldots,m\}$ be a map.
It induces a map $\sigma:\BZ^n\to\BZ^m$ and gives by duality
a map $\BZ^m\to\BZ^n$. This provides a functor
from $\BZ^n$-graded objects to $\BZ^m$-graded objects of $\CA$.
Let $X$ be an $n$-complex. We have a corresponding $\BZ^m$-graded
object $X'$. We define a structure of $m$-complex by
$$d_i^{\prime a}=\sum_{\substack{b\in\sigma^{-1}(a)\\ j\in f^{-1}(i)}}
(-1)^{\sum_{k\in f^{-1}(i),k<j}b_k}d_j^b$$
where $b=\sum_i b_i e_i$.

Note that when $f$ is an injection, the sum above has only positive signs.
When $f$ is a bijection, then $\Tot^f$ is a self-equivalence of
$n\mComp(\CA)$. We write $\Tot=\Tot^f$ when $m=1$.

\smallskip
We have defined an additive functor 
$$\Tot^f: n\mComp(\CA)\to m\mComp(\CA).$$

\medskip
Let $g:\{1,\ldots,m\}\to \{1,\ldots,p\}$ be a map and
$\tau:\BZ^m\to\BZ^p$ the associated morphism.
Let $X\in n\mComp(\CA)$. The $\BZ^p$-graded objects
underlying $\Tot^{gf}(X)$ and $\Tot^g(\Tot^f(X))$ have their
component of degree $a$ equal to $\bigoplus_{c\in (\tau\sigma)^{-1}(a)}
X^c$. We
define an isomorphism between these $p$-complexes by multiplication by
$(-1)^{\eps(c)}$ on $X^c$, where
$$\eps(c)=\sum_{\substack{l<l'\\ f(l)>f(l')\\ gf(l)=gf(l')}}
c_l c_{l'}.$$
This gives an isomorphism of functors
$$\Tot^{gf}\iso \Tot^g\circ \Tot^f.$$

\medskip
Let $k$ be a commutative ring and $A$, $B$ and $C$ be three $k$-algebras.
Let $X\in n\mComp((A\otimes B)\mMOD)$ and
$Y\in m\mComp((B\otimes C)\mMOD)$. Then
$X\otimes_B Y$ defines an object of $(n+m)\mComp((A\otimes C)\mMOD)$.
We have $(X\otimes_B Y)^{(a_1,\ldots,a_{n+m})}=
X^{(a_1,\ldots,a_n)}\otimes_B Y^{(a_{n+1},\ldots,a_{n+m})}$.

\subsubsection{Cohomology}
Assume $\CA$ is an abelian category.
Let $X\in n\mComp(\CA)$. Let $r\in\{1,\ldots,n\}$ and
$Y=\ker d_r/\im d_r$. This is an $n$-complex with $d_{r,Y}=0$.
Let $i\in\BZ$.
Consider the map $f:\BZ^{n-1}\to\BZ^n,\ (a_1,\ldots,a_{n-1})\mapsto
(a_1,\ldots,a_{r-1},i,a_r,\ldots,a_{n-1})$.
We put $H^i_{d_r}(X)=\bigoplus_{a\in\BZ^{n-1}} Y^{f(a)}$. This
defines a functor 
$$H^i_r:n\mComp(\CA)\to (n-1)\mComp(\CA).$$

\smallskip
Let $g:\{1,\ldots,n\}\to\{1,\ldots,m\}$ be a map and let $r\in\{1,\ldots,m\}$
such that $f^{-1}(r)=\{s\}$ for some $s\in\{1,\ldots,n\}$.
Define maps
$$\alpha:\{1,\ldots,n-1\}\to \{1,\ldots,n\},\ i\mapsto \begin{cases}
i & \text{ if }i<s\\
i+1 & \text{ if }i\ge s
\end{cases}$$
and
$$\beta:\{1,\ldots,m\}\to \{1,\ldots,m-1\},\ i\mapsto \begin{cases}
i & \text{ if }i<r\\
i-1 & \text{ if }i\ge r
\end{cases}$$
Then, we have a canonical isomorphism
\begin{equation}
\label{eq:HTot}
H^i_{d_r}(\Tot^f(M))\iso \Tot^{\beta f\alpha}(H^i_{d_s}(M)).
\end{equation}

\subsubsection{Resolutions}
Let $k$ be a commutative ring and $A$ a $k$-algebra.
The $k$-linear
functor $H^0:\Ho^-(A\mProj)\to A\mMOD$ restricted to
the full subcategory of complexes $M$ with $H^i(M)=0$ for $i\not=0$ is
an equivalence. Let $C=C_A:A\mMOD\to \Ho^-(A\mProj)$ be an inverse, composed
with the inclusion functor. By construction the resolution functor $C$ is
fully faithful.
It induces a functor, still denoted by $C$, 
$$C:\Ho^b(A\mMOD)\to\Ho^b\bigl(\Ho^-(A\mProj)\bigr).$$
We view $\Ho^b\bigl(\Ho^-(A\mProj)\bigr)$ as a triangulated category with the canonical
structure on $\Ho^b(\CC)$, where $\CC$ is the additive category
$\Ho^-(A\mProj)$. The functor $C$ is a fully faithful triangulated functor.

\smallskip
Assume $A$ is projective as a $k$-module.
Let $X=C_{A^\en}(A)$ be a projective resolution of $A$ as an $A^\en$-module.
The functor
$$-\otimes_{A^\opp}X:\mathrm{Comp}^b(A\mMOD)\to 2\mComp(A\mProj)$$
composed with the canonical functor $2\mComp(A\mProj)\to
\Ho\bigl(\Ho(A\mProj)\bigr)$ is isomorphic to $C$: we have
$M\otimes_{A^\opp}X\iso C_A(M)$ for $M\in\Ho^b(A\mMOD)$.

\medskip
Let $i\in\BZ$. The functor $H^i:\Ho^-(A\mProj)\to A\mMOD$ induces a functor
$$H^i_{d_2}:\Ho^b\bigl(\Ho^-(A\mProj)\bigr)\to \Ho^b(A\mMOD).$$
It extends
the functor $H^i_{d_2}:2\mComp(A\mMOD)\to \mathrm{Comp}(A\mMOD)$.

\medskip
Let $B,B'$ be two $k$-algebras, projective as $k$-modules.
Let $L\in\Ho^b\bigl((A\otimes B^\opp)\mMOD\bigr)$ and
$M\in\Ho^b((B\otimes B^{\prime\opp})\mMOD)$. Assume the components of $M$ are projective
right $B'$-modules. We deduce from (\ref{eq:HTot}) an isomorphism
\begin{equation}
\label{eq:resoltensor}
\Tot^{13\to 1,2\to 2}\bigl(C_{A\otimes B^\opp}(L)\otimes_B M\bigr)\iso
C_{A\otimes B^{\prime\opp}}\bigl(\Tot(L\otimes_B M)\bigr).
\end{equation}

\smallskip
Note that when $A$ is coherent, then $A\mMOD$ can be replaced by the
abelian category $A\mMod$ and $A\mProj$ by $A\mproj$. If $A$ is graded, we
can replace these categories by the corresponding categories of graded modules.

\subsection{Markov moves}
\label{se:proofMarkov}
Let $(W,S)$ be a finite Coxeter group.
Let $P\mexact$ be the category of finitely generated
graded $P^\en$-modules whose restrictions to $P$ and $P^\opp$ are projective.
Let $M\in \Ho^b(P\mexact)$ and $i\in\BZ$.
We put
$$K^i(M)=K^i_S(M)=H^i_{d_2}\bigl(P\otimes_{P^\en}C_{P^\en}(M)\bigr)\in
\Ho^b(P\mmodgr).$$
This defines a triangulated functor
$$K^i:\Ho^b(P\mexact)\to \Ho^b(P\mmodgr).$$

\begin{thm}
\label{th:traceHH}
Given $N,N'\in\Ho^b(P\mexact)$, we have functorial isomorphisms
$K^i(N\otimes_{P} N')\simeq K^i(N'\otimes_P N)$ in $\Ho(P\mmodgr)$.

Let $s\in S$ and let $z_s$ be a non-zero element of $(V^*)^{S\setminus s}$.
Let $M\in\Ho^b(P_{S\setminus s}\mexact)$.
We have functorial isomorphisms
\begin{itemize}
\item
$K^i_S\bigl(\gamma_{S\setminus s}(M)\otimes_{P}F_s\bigr)\simeq
\rho_{S\setminus s}^* K^{i+1}_{S\setminus s}(M)[-1] \text{ in } D(P\mmodgr)$
\item
$K^i_S\bigl(\gamma_{S\setminus s}(M)\otimes_{P}F_s^{-1}\bigr)\simeq
\rho_{S\setminus s}^* K^i_{S\setminus s}(M)\text{ in } D(P\mmodgr)$
\item 
$K^i_S\bigl(\gamma_{S\setminus s}(M))\simeq
P\otimes_{P_{S\setminus s}} K^{i+1}_{S\setminus s}(M)\langle -1\rangle\oplus
P\otimes_{P_{S\setminus s}} K^i_{S\setminus s}(M)
 \text{ in } \Ho(P\mmodgr)$.
\end{itemize}
\end{thm}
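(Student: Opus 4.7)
The first assertion is a derived cyclic symmetry: for any $P$-bimodules $N, N'$ one has
\[
P \otimes_{P^\en}^L (N \otimes_P^L N') \;\iso\; N \otimes_{P^\en}^L N',
\]
where on the right $N$ is a right and $N'$ is a left $P^\en$-module, making the expression manifestly symmetric in $N \leftrightarrow N'$. To realize this at the level of $K^i$, I would use (\ref{eq:resoltensor}) together with the fact that $N, N' \in P\mexact$ are projective as one-sided $P$-modules to rewrite the resolution $C_{P^\en}(N \otimes_P N')$ as a totalization of $C_{P^\en}(N) \otimes_P N'$. Applying $P \otimes_{P^\en}(-)$ and implementing the cyclic swap at the chain level, then taking $H^i_{d_2}$, yields the isomorphism in $\Ho(P\mmodgr)$.

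For the Markov moves I begin with the third bullet, which produces the basic Künneth pattern. Writing $P = P_{S\setminus s} \otimes k[x]$ with $x$ a generator of $V^{S\setminus s}$, we have $P^\en = P_{S\setminus s}^\en \otimes k[x]^\en$, and the length-one Koszul resolution $(k[x]^\en\langle -1\rangle \xrightarrow{x \otimes 1 - 1 \otimes x} k[x]^\en)$ of $k[x]$ over $k[x]^\en$ gives, tensored with a resolution of $P_{S\setminus s}$ over $P_{S\setminus s}^\en$, a resolution of $P$ over $P^\en$. Applying $P \otimes_{P^\en}(-)$ to this resolution tensored with $\gamma_{S\setminus s}(M) = k[x] \otimes M$, the Koszul differential on the $k[x]$-factor collapses to zero, and a Künneth argument identifies the result with $\HH_*(k[x], k[x]) \otimes_k \HH_*(P_{S\setminus s}, M)$. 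Since $\HH_*(k[x], k[x])$ is concentrated in Hochschild degrees $0$ and $1$ (with the degree-$1$ part being $k[x]\langle -1\rangle$), taking $H^i_{d_2}$ produces the direct sum $P \otimes_{P_{S\setminus s}} K^i_{S\setminus s}(M) \oplus P \otimes_{P_{S\setminus s}} K^{i+1}_{S\setminus s}(M)\langle -1\rangle$, as claimed; the shift $i \to i+1$ arises from the Künneth convolution with the Hochschild-degree-$1$ contribution of $k[x]$.

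The first two bullets follow by expanding $F_s$ and $F_s^{-1}$ as two-term complexes and applying the above analysis to each term. The essential ingredient is the computation of $\HH_*(P, \gamma_{S\setminus s}(M) \otimes_P \theta_s)$. Using $\theta_s = P \otimes_{P^s} P$ together with the $P^s$-module decomposition $P = P^s \oplus P^s z_s$ afforded by the choice of $z_s \in (V^*)^{S\setminus s}$, a change-of-rings argument identifies this Hochschild homology with $\rho_{S\setminus s}^* \HH_*(P_{S\setminus s}, M)$ up to an appropriate internal shift. Combining this with the Künneth decomposition of $\HH_*(P, \gamma_{S\setminus s}(M))$ from the previous paragraph, the multiplication $m : \theta_s \to P$ in $F_s$ --- and the analogous differential $a \mapsto a\alpha_s \otimes 1 + a \otimes \alpha_s$ in $F_s^{-1}$ --- induce on $K^i_S$ morphisms that are split isomorphisms onto one of the two Künneth summands. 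The two-term complex thereby collapses in $\Ho(P\mmodgr)$ to the other summand, leaving $\rho_{S\setminus s}^* K^i_{S\setminus s}(M)$ in the case of $F_s^{-1}$ and $\rho_{S\setminus s}^* K^{i+1}_{S\setminus s}(M)[-1]$ in the case of $F_s$ (the $[-1]$ reflecting that the uncancelled term of $F_s$ sits in cohomological degree $1$).

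The principal obstacle is the chain-level verification that the maps induced on Hochschild homology by the differentials of $F_s^{\pm 1}$ are indeed split isomorphisms onto the correct Künneth summand, so that the two-term complexes collapse to single terms rather than producing non-trivial extensions in $\Ho(P\mmodgr)$. This demands careful tracking of signs through the Tot functor and (\ref{eq:resoltensor}), and the precise role of the element $z_s$ in the change-of-rings identification of $\HH_*(P, \gamma_{S\setminus s}(M) \otimes_P \theta_s)$.
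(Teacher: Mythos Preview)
Your treatment of the trace property and of the third bullet (the K\"unneth decomposition for $\gamma_{S\setminus s}(M)$) is essentially the paper's argument. The gap lies in the first two bullets.

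Your sketch asserts that a change-of-rings argument identifies $\HH_*(P,\gamma_{S\setminus s}(M)\otimes_P\theta_s)$ with a single copy of $\rho_{S\setminus s}^*\HH_*(P_{S\setminus s},M)$ up to shift, after which the differential of $F_s^{\pm1}$ cancels one of the two K\"unneth summands of $K^i(\gamma_{S\setminus s}(M))$ by a split surjection. This is not how the cancellation actually runs, and the intermediate claim is not correct as stated. The paper does \emph{not} compute $K^i$ of the $\theta_s$-term in isolation; instead it tensors the whole of $F_s$ with the Koszul resolution $X$ in the $z_s$-variable to produce a bicomplex $F_s\otimes_{k[z_s]^\en}X$, and the decisive structural input is Lemma~\ref{le:splittingF}: the short exact sequence $0\to P\to\theta_s\to Ps\to 0$ splits over $P_{S\setminus s}^\en$ (not over $P^\en$). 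This splitting lets one filter the bicomplex by $Y_1\subset F_s\otimes X$ with quotient $Y_2$; the piece $Y_2$ carries the $Ps$-summand together with the bottom row and is contractible via the sequence $0\to Ps\to\theta_s\xrightarrow{m}P\to 0$, while $Y_1$ is the two-term complex $P\langle-1\rangle\xrightarrow{2\alpha_s}P$ concentrated in a single row. Your $P^s$-module decomposition $P=P^s\oplus P^s z_s$ is not a substitute for this $P_{S\setminus s}^\en$-splitting of $\theta_s$.

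There is a second point your outline misses: why the first two bullets are stated in $D(P\mmodgr)$ rather than $\Ho(P\mmodgr)$. The surviving piece $Y_1$ carries the differential $2\alpha_s$, which is not split; it is only in the derived category that $(P\langle-1\rangle\xrightarrow{2\alpha_s}P)$ becomes $\rho_{S\setminus s}^*P_{S\setminus s}$. Your ``split isomorphism onto one summand'' would force a statement in $\Ho$, which is stronger than what holds. So the two genuinely missing ingredients are the $P_{S\setminus s}^\en$-splitting of $\theta_s$ (Lemma~\ref{le:splittingF}) and the passage to the derived category imposed by the residual $2\alpha_s$.
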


\begin{proof}
Thanks to (\ref{eq:resoltensor}), we have
$$C_{P^\en}\bigl(\Tot(N\otimes_PN')\bigr)\simeq
\Tot^{12\to 1,3\to 2}(C_{P^\en}(N)\otimes_P N')$$
hence
\begin{align*}
P\otimes_{P^\en}C_{P^\en}\bigl(\Tot(N\otimes_PN')\bigr)&\simeq
\Tot^{12\to 1,3\to 2}\bigl(C_{P^\en}(N)\otimes_{P^\en}N'\bigr)\\
&\simeq \Tot^{12\to 1,3\to 2}\bigl(N'\otimes_{P^\en}C_{P^\en}(N)\bigr)\\
&\simeq P\otimes_{P^\en}C_{P^\en}\bigl(\Tot(N'\otimes_PN)\bigr)
\end{align*}
and the first assertion follows.

\medskip
We have
$$C_{P^\en}(M\otimes k[z_s])\simeq
\Tot^{1\to 1, 23\to 2}\bigl(C_{P_{S\setminus s}^\en}(M)\otimes X\bigr)$$
where
$X=0\to k[z_s]^\en\langle -1\rangle \xrightarrow{z_s\otimes 1
-1\otimes z_s} k[z_s]^\en \to 0$, the non-zero terms being
in degrees $-1$ and $0$.
Let 
$$L=P\otimes_{P^\en}\biggl(C_{P^\en}\Bigl(
\Tot\bigl((M\otimes k[z_s])
\otimes_{P}F_s\bigr)\Bigr)\biggr).$$
By (\ref{eq:resoltensor}), we have
$$L\simeq
P\otimes_{P^\en}\Tot^{13\to 1,2\to 2}
\bigl(C_{P^\en}(M\otimes k[z_s]) \otimes_{P}F_s\bigr)
\simeq \Tot^{13\to 1,24\to 2}\bigl(
(F_s\otimes_{k[z_s]^\en}X)
\otimes_{P_{S\setminus s}^\en} C_{P_{S\setminus s}^\en}(M)\bigr)$$
We have
$$F_s\otimes_{k[z_s]^\en}X\simeq
\xy (0,8) *+{\theta_s}="1",
(20,8) *+{P}="2",
(0,-8) *+{\theta_s\langle 1\rangle}="3",
(20,-8) *+{P\langle 1\rangle}="4",
\ar^-m"1";"2",
\ar_{z_s\otimes 1-1\otimes z_s}"1";"3",
\ar^0"2";"4",
\ar_-m"3";"4"
\endxy
\simeq
\xy (0,8) *+{\theta_s}="1",
(20,8) *+{P}="2",
(0,-8) *+{\theta_s\langle 1\rangle}="3",
(20,-8) *+{P\langle 1\rangle}="4",
\ar^-m"1";"2",
\ar_{\alpha_s\otimes 1-1\otimes \alpha_s}"1";"3",
\ar^0"2";"4",
\ar_-m"3";"4"
\endxy
$$
Indeed, there is $c\in k^*$ such that $s(z_s)-z_s=-2c\alpha_s$. Then
$z_s-c\alpha_s\in (V^*)^s$, hence $z_s\otimes 1-1\otimes z_s$ and
$c(\alpha_s\otimes 1-1\otimes \alpha_s)$ are equal in $\theta_s$.

Lemma \ref{le:splittingF} below shows that, when $s{\not\in}Z(W)$, then
the exact sequence of $P^\en$-modules
$$0\to P\xrightarrow{\alpha_s\otimes 1+1\otimes \alpha_s}
\theta_s\xrightarrow{a\otimes b\mapsto as(b)} Ps\to 0$$
splits by restriction to $P_{S\setminus s}^\en$. Here,
$Ps=P$ as a left $P$-module, and the right action of $a\in P$ is given by
multiplication by $s(a)$. Note that when $s\in Z(W)$,
then $P^s=P_{S\setminus s}\otimes k[\alpha_s^2]$, and the splitting
of the sequence holds trivially.

We deduce that there is an isomorphism of complexes of graded
$P_{S\setminus s}^\en$-modules
$$\bigl(0\to\theta_s\xrightarrow{\alpha_s\otimes 1-1\otimes
\alpha_s}
\theta_s\langle 1\rangle\to 0\bigr)\simeq P[1]\langle -1\rangle \oplus
\bigl(0\to Ps\xrightarrow{a\mapsto a\alpha_s\otimes 1-a\otimes \alpha_s}
\theta_s \to 0\bigr)\langle 1\rangle.$$

$$\text{Let }Y_1=
\xy (0,8) *+{P\langle-1\rangle}="1",
(20,8) *+{P}="2",
(0,-8) *+{0}="3",
(20,-8) *+{0}="4",
\ar^-{2\alpha_s}"1";"2",
\ar"1";"3",
\ar"2";"4",
\ar"3";"4"
\endxy
\text{ and }
Y_2=
\xy (0,8) *+{Ps}="1",
(20,8) *+{0}="2",
(0,-8) *+{\theta_s\langle 1\rangle}="3",
(20,-8) *+{P\langle 1\rangle}="4",
\ar"1";"2",
\ar_{a\mapsto a\alpha_s\otimes 1-a\otimes \alpha_s}"1";"3",
\ar"2";"4",
\ar"3";"4"
\endxy$$
We have an exact sequence of bicomplexes of graded
$P^\en$-modules
$$0\to Y_1 \to F_s\otimes_{k[\alpha_s]^\en}X \to Y_2\to 0.$$
It splits after restricting to $P_{S\setminus s}^\en$ and
applying $?^{(i,*)}$. It follows that
we have an exact sequence of complexes of graded $P^\en$-modules
\begin{multline*}
0\to H^i_{d_2}\Bigl(\Tot^{13\to 1,24\to 2}
\bigl(Y_1\otimes_{P_{S\setminus s}^\en}C_{P_{S\setminus s}^\en}(M)\bigr)\Bigr)
\to
H^i_{d_2}(L) \to \\
\to H^i_{d_2}\Bigl(\Tot^{13\to 1,24\to 2}
\bigl(Y_2\otimes_{P_{S\setminus s}^\en}C_{P_{S\setminus s}^\en}(M)\bigr)\Bigr)
\to 0.
\end{multline*}

\smallskip
We have an exact sequence of $P^\en$-modules
$$0\to Ps \xrightarrow{a\mapsto a\alpha_s\otimes 1-a\otimes \alpha_s}
\theta_s \xrightarrow{m} P\to 0.$$
So,
the morphism of bicomplexes $Y_2\to Y'_2$:
$$\xymatrix{
Ps\ar[r] 
\ar[d]_{a\mapsto a\alpha_s\otimes 1-a\otimes \alpha_s} &
 0\ar[d] &&
 0 \ar[d]\ar[r] & 0\ar[d] \\
 \theta_s\langle 1\rangle \ar[r]_-m \ar@/_2pc/[rrr]_m & P\langle 1\rangle
 \ar@/_2pc/[rrr]_{\id}&&
P\langle 1\rangle \ar[r]^-{\id} & P\langle 1\rangle}
$$
induces an isomorphism of complexes
$$H^i_{d_2}\Bigl(\Tot^{13\to 1,24\to 2}
\bigl(Y_2\otimes_{P_{S\setminus s}^\en}C_{P_{S\setminus s}^\en}(M)\bigr)\Bigr)
\iso
H^i_{d_2}\Bigl(\Tot^{13\to 1,24\to 2}\bigl(Y'_2\otimes_{P_{S\setminus s}^\en}
C_{P_{S\setminus s}^\en}(M)\bigr)\Bigr)$$
and these complexes vanish in $\Ho^b(P^\en\mmodgr)$.

We deduce that
\begin{align*}
H^i_{d_2}(L) &\simeq
H^i_{d_2}\Bigl(\Tot^{13\to 1,24\to 2}
\bigl(Y_1\otimes_{P_{S\setminus s}^\en}C_{P_{S\setminus s}^\en}(M)\bigr)\Bigr)\\
&\simeq
\rho_{S\setminus s}^*
H^i_{d_2}\Bigl(\Tot^{13\to 1,24\to 2}
\bigl(P_{S\setminus s}[(-1,1)]
\otimes_{P_{S\setminus s}^\en}C_{P_{S\setminus s}^\en}(M)\bigr)\Bigr)\\
&\simeq \rho_{S\setminus s}^*
H^{i+1}_{d_2}\bigl(P_{S\setminus s}\otimes_{P_{S\setminus s}^\en}
C_{P_{S\setminus s}^\en}(M)\bigr)[-1]
\end{align*}
in $D^b(P^\en\mmodgr)$. Note that the multiplication map
$P^\en\to P$ is a split surjection of algebras. We deduce the first and
last terms of the sequence of isomorphisms above are actually isomorphic in
$D^b(P\mmodgr)$.
This shows the second assertion.
The proof of the assertion involving $F_s^{-1}$ is similar.

\smallskip
We have
$k[z_s]\otimes_{k[z_s]^\en}X\simeq k[z_s]\langle -1\rangle[1]\oplus k[z_s]$ and the
last assertion follows immediately.
\end{proof}

\begin{lemma}
\label{le:splittingF}
Let $s\in S$. Assume $s{\not\in}Z(W)$.
Let $L=(V^{S\setminus s})^\perp \cap (V^*)^s$, a hyperplane
of $(V^{S\setminus s})^\perp=(V_{S\setminus s})^*$.
We have a commutative diagram of $P_{S\setminus s}^\en$-modules where the
diagonal map is an isomorphism
$$\xymatrix{
\theta_s\ar[r]^-{a\otimes b\mapsto as(b)} & Ps \\
P_{S\setminus s}\otimes_{S(L)}P_{S\setminus s}
\ar[u]^{a\otimes b\mapsto a\otimes b}
 \ar[ur]_{a\otimes b\mapsto as(b)}^\sim}$$
Here, $Ps$ denotes the left $P_{S\setminus s}$-module $P$ endowed
with a right action of $a\in P_{S\setminus s}$ by multiplication by $s(a)$.
\end{lemma}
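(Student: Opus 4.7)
The plan is to reduce the lemma to a pushout decomposition of the symmetric algebra $P = S(V^*)$. The key geometric input is the identification $L = V_{S\setminus s}^* \cap s(V_{S\setminus s}^*)$ together with the equality $V^* = V_{S\setminus s}^* + s(V_{S\setminus s}^*)$.

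First I would establish these two linear algebra statements. Writing an element of $V_{S\setminus s}^* = \bigoplus_{t\in S\setminus s} k\alpha_t$ in the basis $\{\alpha_t\}_{t\in S\setminus s}$ and using the formula $s\cdot\alpha_t = \alpha_t + \alpha_t(e_s)\alpha_s$ for the induced action on $V^*$, one sees that both the condition of $s$-invariance (starting from the side of $V_{S\setminus s}^*$) and the condition of lying in $V_{S\setminus s}^*$ (starting from the side of $s(V_{S\setminus s}^*)$) are equivalent to the single linear equation $\sum_t c_t \alpha_t(e_s) = 0$; hence $L = V_{S\setminus s}^*\cap s(V_{S\setminus s}^*)$. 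The hypothesis $s\notin Z(W)$ gives some $t\in S\setminus s$ with $\alpha_t(e_s)\neq 0$, so $\alpha_s \in V_{S\setminus s}^* + s(V_{S\setminus s}^*)$ and the sum is the whole of $V^*$.

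Next I would pass to symmetric algebras. Choosing complements $V_{S\setminus s}^* = L\oplus U_1$ and $s(V_{S\setminus s}^*) = L\oplus U_2$ yields $V^* = L\oplus U_1\oplus U_2$, and comparing each side with $S(L)\otimes S(U_1)\otimes S(U_2)$ shows that the multiplication map
$$P_{S\setminus s}\otimes_{S(L)} s(P_{S\setminus s})\longrightarrow P,\qquad a\otimes b\mapsto ab,$$
is a $k$-algebra isomorphism. Since $s$ fixes $L$ pointwise, the automorphism $s$ of $P$ restricts to an $S(L)$-algebra isomorphism $P_{S\setminus s}\iso s(P_{S\setminus s})$; composing produces the diagonal map
$$\psi : P_{S\setminus s}\otimes_{S(L)} P_{S\setminus s}\iso Ps,\qquad a\otimes b\mapsto as(b),$$
and its $P_{S\setminus s}^\en$-linearity is immediate from the definition of the twisted right action on $Ps$. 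Well-definedness of the vertical arrow $a\otimes b\mapsto a\otimes b$ into $\theta_s = P\otimes_{P^s} P$ uses only $L\subset P^s$, and commutativity of the triangle is then formal.

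The main obstacle will be the first step: the clean identification of $L$ with the intersection $V_{S\setminus s}^*\cap s(V_{S\setminus s}^*)$ and the use of $s\notin Z(W)$ to obtain the sum decomposition. Once this linear algebra input is in place, the symmetric algebra pushout and the twist by $s$ are formal, and the $P_{S\setminus s}^\en$-bimodule structure is just bookkeeping.
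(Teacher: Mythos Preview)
Your proof is correct and rests on the same linear-algebra input as the paper's: the assumption $s\notin Z(W)$ produces a $t\in S\setminus s$ with $\alpha_t(e_s)\neq 0$, whence $\alpha_s$ lies in $V_{S\setminus s}^*+s(V_{S\setminus s}^*)$. The execution differs slightly. You first reinterpret $L$ as the intersection $V_{S\setminus s}^*\cap s(V_{S\setminus s}^*)$ and then pass to symmetric algebras via an explicit direct-sum splitting $V^*=L\oplus U_1\oplus U_2$, obtaining the isomorphism as a pushout $S(L\oplus U_1)\otimes_{S(L)}S(L\oplus U_2)\simeq S(V^*)$. The paper instead argues directly with the map $\phi:a\otimes b\mapsto as(b)$: it observes $\phi(\alpha_t\otimes 1-1\otimes\alpha_t)\in k^\times\alpha_s$, deduces surjectivity from $P=P_{S\setminus s}[\alpha_s]$, and then concludes injectivity by a graded-rank count (both sides are free left $P_{S\setminus s}$-modules with Hilbert series $1+t+t^2+\cdots$). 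Your route is a bit more structural and avoids the rank comparison; the paper's is shorter and bypasses the reidentification of $L$. Either is fine.
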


\begin{proof}
We will show that $\phi:P_{S\setminus s}\otimes_{S(L)}P_{S\setminus s}\to
P,\ a\otimes b\mapsto as(b)$ is an isomorphism. It is a morphism
of algebras, and a morphism of graded left $P_{S\setminus s}$-modules.

By assumption, there is $t\in S\setminus s$ such that $m_{st}\not=2$, so that
$s(\alpha_t)-\alpha_t$ is a non-zero multiple of $\alpha_s$. It follows that
$\phi(\alpha_t\otimes 1-1\otimes \alpha_t)\in k^\times\alpha_s$. 
Since $V^*=(V^{S\setminus s})^\perp\oplus k\alpha_s$, we have
$P=P_{S\setminus s}\otimes k[\alpha_s]$ and we deduce that $\phi$ is
surjective. 

Since $\phi$ is a morphism of graded free left $P_{S\setminus s}$-modules
with the same graded ranks $1+t+t^2+\cdots$, we deduce that $\phi$ is an
isomorphism.
\end{proof}

\begin{rem}
While we do not expect the category of Soergel bimodules to exist for
complex reflection groups, we hope that its homotopy category does exist,
as well as the $2$-braid group. This would be a starting point for a
structural approach to the construction of unipotent data in
Brou\'e--Malle--Michel's theory of spets
\cite{BroMaMi}.
\end{rem}

\end{document}